\documentclass[12pt]{article}
\usepackage[margin=2cm]{geometry}
\usepackage{amsmath,amstext,amsfonts,amsthm,bbm}

\theoremstyle{plain}
\newtheorem{thm}{Theorem}
\newtheorem{defin}{Definition}[section]

\newtheorem{cor}[defin]{Corollary}

\newtheorem*{thm*}{Theorem}

\theoremstyle{remark}
\newtheorem{rem}[defin]{Remark}
\def\ch{\operatorname{ch}}

\begin{document}
\title{On  a uniqueness theorem of E.\ B.\ Vul}
\author{Sasha Sodin\footnote{School of Mathematical Sciences, Queen Mary University of London, London E1 4NS, United Kingdom \& School of Mathematical Sciences, Tel Aviv University, Tel Aviv, 69978, Israel.  Email: a.sodin@qmul.ac.uk. This work is supported in part by the European Research Council starting grant 639305 (SPECTRUM) and by a Royal Society Wolfson Research Merit Award.}}
\maketitle

\begin{abstract}
We recall a uniqueness theorem of E.\ B.\ Vul pertaining to a version of the cosine transform originating in spectral theory. Then we point out an application to the Bernstein approximation problem with non-symmetric weights: a theorem of Volberg is proved by elementary means. \end{abstract}

\section{Introduction}
\paragraph{A cosine transform and a uniqueness theorem}
The goal of this note is to draw attention to a uniqueness theorem for an integral transform originating in the spectral theory of Sturm--Liouville operators, and to point out an application in approximation theory. 

Let $\sigma: \mathbb R \to \mathbb R$ be a function of bounded variation such that
\begin{equation}
\forall x \geq 0 \,\, M_\sigma(x) \overset{\text{def}}= \int_{-\infty}^0 \exp(x \sqrt{|\lambda|}) \, | d\sigma(\lambda)| < \infty~.
\end{equation}
Define 
\begin{equation} (\mathfrak C\sigma)(x) =  \int_{-\infty}^\infty \cos(x \sqrt{\lambda}) d\sigma(\lambda)~, \quad x \in \mathbb R~,
\end{equation}
where (for $\lambda < 0$) $\cos(x \sqrt \lambda) = \ch (x \sqrt{-\lambda})$. The transform $\mathfrak C$ (and its variants) appeared in the works of Povzner \cite{Pov} and Krein \cite{Krein1}, who showed that a continuous even function $f: [-2a, 2a] \to \mathbb R$ defines an Hermitian-positive kernel $K_f(x, y) = f(x - y) + f(x+y)$ on $L_2(0, a)$ if and only if it has a representation $f = \mathfrak C \sigma$ with an increasing $\sigma$. 

These works led to the question whether the representation $f = \mathfrak C \sigma$ is unique for an increasing $\sigma$, i.e.\ for which (not necessarily increasing) $\sigma$ the equality $\mathfrak C \sigma \equiv 0$ implies that $\sigma \equiv \operatorname{const}$. Levitan proved \cite{Lev} that if $\mathfrak C \sigma \equiv 0$ and 
\[ \forall x \geq 0 \,\,  M_\sigma(x) \leq C \exp(C x^\alpha) \]
for some $\alpha < 2$, then $\sigma \equiv \operatorname{const}$. Levitan and Meiman \cite{LM} showed that the same conclusion remains valid for $\alpha = 2$. Finally, Vul proved \cite{Vul} the following definitive result:
\begin{thm}[Vul]\label{thm:vul} Suppose $p: \mathbb R_+ \to \mathbb R_+$ is a non-decreasing convex function.
\begin{enumerate}
\item  If  $p$ satisfies
\begin{equation}\label{eq:vulcond}
\int^\infty \frac{p(s)}{s^3} ds = \infty~,
\end{equation}
and $\sigma$ is a function of bounded variation such that 
\begin{equation}\label{eq:maj}
\forall x \geq 0\,\, M_\sigma(x) \leq C \exp(p^*(x)) \overset{\text{def}}{=} C \exp(\sup_{s \geq 0} \left[ x s - p(s) \right] )~.
\end{equation}
and $\mathfrak C \sigma \equiv 0$, then $\sigma \equiv \operatorname{const}$. 
\item If (\ref{eq:vulcond}) fails, there exists $\sigma$ of bounded variation such that  (\ref{eq:maj}) holds and $\mathfrak C \sigma \equiv 0$, and yet $\sigma \not\equiv \operatorname{const}$.
\end{enumerate}
\end{thm}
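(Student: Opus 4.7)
The plan is to establish Part 1 (uniqueness) by reducing to a quasi-analyticity statement about an entire function in a suitable class, and Part 2 (sharpness) by reversing that argument to construct an explicit counterexample.

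To set up the reduction, I would decompose $\sigma = \sigma_+ + \sigma_-$ into its restrictions to $[0,\infty)$ and $(-\infty,0)$, and push these forward under $\lambda\mapsto\sqrt{\lambda}$ and $\lambda\mapsto\sqrt{-\lambda}$ respectively, obtaining finite signed Borel measures $\mu$ and $\nu$ on $[0,\infty)$. The hypothesis $\mathfrak{C}\sigma\equiv 0$ then reads $\int\cos(xt)\,d\mu(t)+\int\cosh(xt)\,d\nu(t)=0$ for all $x\in\mathbb{R}$. The key object is the entire function $G(z)=\int_0^\infty \cosh(zt)\,d\nu(t)$: the estimate $|\cosh(zt)|\leq\cosh(|\Re z|\,t)$ together with \eqref{eq:maj} gives the ``horizontal-only'' growth $|G(z)|\leq C\exp(p^*(|\Re z|))$; the displayed identity forces $|G(x)|\leq|\mu|(\mathbb{R}_+)$ on the real axis; and directly $G(iy)=\int\cos(yt)\,d\nu(t)$ is bounded on the imaginary axis by $|\nu|(\mathbb{R}_+)$.

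The crux is to show $G\equiv 0$ under the Vul condition \eqref{eq:vulcond}. I would apply Carleman's formula to $G$ on the upper half-plane: for each $R$, it relates a weighted sum over the zeros of $G$ in the upper half-disk of radius $R$ to (i) an integral of $\log|G|$ along $[-R,R]\subset\mathbb{R}$, (ii) an integral of $\log|G|$ along the upper half-circle of radius $R$, and (iii) lower-order terms at the origin. The real-axis bound on $G$ controls~(i) uniformly in $R$; the growth bound combined with $|\Re z|=R|\cos\theta|$ on the half-circle yields an estimate for (ii) of order $p^*(R)/R$. A short Legendre-transform computation shows that $\int^\infty p^*(r)\,dr/r^2$ diverges if and only if $\int^\infty p(s)\,ds/s^3$ does, so the Vul divergence would force the ``boundary-at-infinity'' contribution to overpower the rest --- a contradiction with the imaginary-axis bound unless $G$ has no zeros and, by Hadamard factorisation together with the axis bounds, reduces to $G\equiv 0$. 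Given $G\equiv 0$, the even moments $G^{(2k)}(0)=\int t^{2k}\,d\nu$ all vanish while their Cauchy-estimate growth is compatible with Carleman's criterion for the Stieltjes moment problem, so $\nu=0$; the residual equation then yields $\mu=0$ by Fourier uniqueness, and hence $\sigma\equiv\operatorname{const}$. For Part~2, when $\int p(s)\,ds/s^3<\infty$, the Carleman computation runs in reverse: one constructs a nonzero canonical product $G$ whose zero density is accommodated by the now-finite integral, bounded on both axes and of the required growth, and extracts a nontrivial $\sigma$ with $\mathfrak{C}\sigma\equiv 0$.

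The main obstacle is the Carleman-formula step together with the Legendre-transform matching that produces the precise power $s^3$ (rather than the $s^2$ familiar from the Denjoy--Carleman criterion for moments): it is here that Vul's condition enters in its sharp form, and tracking the boundary and remainder terms in Carleman's formula accurately enough to witness this borderline is what makes the argument non-routine.
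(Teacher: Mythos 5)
There is a genuine gap at the central step, the claim that Carleman's formula forces $G\equiv 0$. Every piece of information you feed into Carleman's formula is an \emph{upper} bound: $|G|\leq C\exp(p^*(|\Re z|))$ in the plane, $|G|\leq |\mu|(\mathbb R_+)$ on the real axis, $|G|\leq|\nu|(\mathbb R_+)$ on the imaginary axis. Carleman's formula (or the half-plane log-integral theorem) then only bounds the zero-counting sum from above by a positive quantity; it produces no contradiction and certainly does not yield $G\equiv 0$ --- note that a nonzero constant satisfies all three of your bounds. To run any argument of this type you must exhibit a substantial set on which $\log|G|$ (or the log of some associated analytic function) is so negative that its weighted boundary integral diverges to $-\infty$, and nothing in your setup manufactures such smallness. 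This is exactly what the heart of Vul's proof supplies: from (\ref{eq:maj}) one first extracts, by optimising over $x$, the Legendre-dual tail decay $\int_{-\infty}^{\lambda_0}|d\sigma|\leq C\exp(-p(\sqrt{|\lambda_0|}))$, and then the identity (\ref{eq:id}), which represents the resolvent kernel as a cosine transform, lets one use $\mathfrak C\sigma\equiv 0$ to trade the non-decaying part of $\sigma$ for the decaying tail; the outcome is that the \emph{bounded} Stieltjes transform $F(z)=\int d\sigma(\lambda)/(\lambda-z-i)$ satisfies $|F(x)|\leq C\exp(-\tfrac14 p(\sqrt{|x|}/2))$ for $x<0$, and Carleman's theorem applied to $F$ (not to a cosh-transform) finishes the job. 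Those two ingredients --- the dual tail estimate and the identity converting $\mathfrak C\sigma\equiv0$ into pointwise exponential smallness --- are the missing ideas, and without them your reduction to $G$ merely restates the problem.

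A secondary but substantive error: the asserted equivalence of $\int^\infty p^*(r)\,dr/r^2=\infty$ with (\ref{eq:vulcond}) is false. For $p(s)=s^2/(\log s)^2$ one has $\int^\infty p(s)\,ds/s^3<\infty$, yet $p^*(x)\asymp x^2(\log x)^2$, so $\int^\infty p^*(r)\,dr/r^2=\infty$. The exponent $3$ does not come from a Legendre transform of the majorant at all: it comes from the substitution $s=\sqrt{|x|}$ in $\int\log|F(x)|\,dx/(1+x^2)$ once the bound $|F(x)|\leq C\exp(-\tfrac14 p(\sqrt{|x|}/2))$ is in hand. Finally, your Part 2 is too schematic to assess ("the computation runs in reverse"); the actual construction takes a concrete entire function of Mandelbrojt (a product of dilated cardinal sines) majorised by $\exp(|y|-p(\sqrt{|z|})-\sqrt{|z|})$, multiplies by $e^{iz}$ to kill the growth in the upper half-plane, and takes $\sigma$ to be a primitive of its real or imaginary part, with $\mathfrak C\sigma\equiv0$ obtained by shifting the contour upwards.
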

\begin{rem}In \cite{Vul}, this result is stated under an additional assumption 
\[ \lim_{x \to \infty} \frac{x {p^*}'(x)}{p^*(x)} = \gamma > 1~,\]
however, this requirement can be omitted with no essential modifications in the proof (which we reproduce in Section~\ref{s:vul}).\end{rem}

\bigskip\noindent
The uniqueness theorems of Levitan, Meiman, and Vul have found numerous applications. Already Krein \cite{Krein} used the result of \cite{LM} to provide sufficient conditions for the self-adjointness of one-dimensional second order differential operators in terms of the tails of (some) spectral measure. In \cite{LevSears}, Levitan showed that Theorem~\ref{thm:vul} implies the following sufficient condition, due to Sears \cite{Sears} for the self-adjointness of the Schr\"odinger operator $L = - \Delta + q$ on $\mathbb R^d$:
\begin{equation}\label{eq:sears} q(x) \geq - Q(|x|)~, \quad \text{where $Q > 0$ is monotone increasing and } \int^\infty \frac{dr}{\sqrt{Q(r)}} = \infty~.  \end{equation}
The earlier result of \cite{LM} recovers the weaker sufficient condition $q(x) \geq - Ax^2 - B$ due to Titchmarsh \cite{Titch}. Uniqueness results related to Theorem~\ref{thm:vul} have been also used in other branches of spectral theory, for example, in the study of the spectral edges of random band matrices \cite{me}. 

In this note, we present an application to the Bernstein approximation problem with non-symmetric weights, and also use the opportunity to make the proof of Vul accessible in English.

\paragraph{Bernstein's approximation problem} The setting is as follows: given a lower semicontinuous function $W: \mathbb R \to [1, \infty]$ such that $1/W(\lambda) = O(\lambda^{-\infty})$, one inquires whether polynomials are dense in the space 
\[ C_0(1/W) = \left\{ u \in C(\mathbb R)\, \mid \, \lim_{\lambda \to \infty} u(\lambda) / W(\lambda) = 0 \right\}~, \quad \|u\|_{C_0(1/W)} = \sup_\lambda |u(\lambda)|/W(\lambda)~. \]
Equivalently, consider the space
 \[ \mathfrak Q_{W} = \left\{ f(x) = \int e^{i x \lambda} d\sigma(\lambda) \, : \, \int W(\lambda) |d\sigma(\lambda)| < \infty \right\}~. \]
By a  Hahn--Banach argument, polynomials are dense in $C_0(1/W)$ if and only if $\mathfrak Q_{W}$ is quasianalytic in the sense of Hadamard, i.e.\ a function $f \in \mathfrak Q_{W}$ that has a zero of infinite order has to vanish identically. 
 
\medskip
We recall some of the classical results on the Bernstein approximation problem, and refer to the surveys \cite{Akh} and \cite{Merg} and the book \cite{Koosis} for more details. Hall  \cite{Hall} and Izumi and Kawata \cite{IK} proved that the condition 
\begin{equation}\label{eq:hall}
\int_{-\infty}^\infty \frac{\log W(\lambda)}{1 + \lambda^2} d\lambda = + \infty 
\end{equation}
is necessary for the completeness of polynomials in $C_0(1/W)$. Carleson showed \cite{Car} that in general, (\ref{eq:hall}) is not sufficient. However \cite{IK,Car}, if a weight $W(\lambda)$ satisfying (\ref{eq:hall}) is even, and the function $s \mapsto \log W(e^s)$ is convex, then polynomials are dense in $C_0(1/W)$, and consequently also in $C_0(1/\tilde W)$ for any $\tilde W \geq W$. Still, there are weights $\tilde W$ for which polynomials are dense and yet no such minorant $W \leq \tilde W$ exists. 
Several necessary and sufficient conditions for completeness were derived, particularly, by Akhiezer--Bernstein, Pollard, Mergelyan (see \cite{Akh, Merg}), de Branges \cite{DeB}, and more recently by Poltoratski \cite{Polt}, yet these are not always easily verifiable. 

Now we turn to the case when $W$ has different rates of growth at $\pm \infty$.  In the extreme case $W(\lambda) \equiv +\infty$ for $\lambda < 0$, it follows from the result of Hall and Izumi--Kawata (for example, using the map $f \to \phi_f$ in (\ref{eq:phi}) below) that the condition  
\begin{equation}\label{eq:hall'}
\int_0^\infty \frac{\log W(\lambda)}{1 + \lambda^{3/2}} d\lambda = + \infty 
\end{equation}
is necessary for the completeness of polynomials in $C_0(1/W)$. Consequently, if $W$ is an arbitrary weight, it is necessary for the completeness of polynomials in $C_0(1/W)$ that the pair of conditions 
\begin{equation}
\int_0^\infty \frac{\log W(\lambda)}{1 + \lambda^{3/2}} d\lambda = + \infty~, \quad 
 \int_0^\infty \frac{\log W(-\lambda)}{1 + \lambda^{2}} d\lambda = + \infty
\end{equation}
be satisfied by either $W(\lambda)$ or $W(-\lambda)$. In \cite{Vol}, Volberg showed, answering a question asked by Mergelyan  and  by Ehrenpreis (cf.\ \cite[Problem 13.8]{Ehr}), that the conditions 
\begin{align} \label{eq:vola}
&\int^\infty \frac{\log W(\lambda)}{\lambda^{3/2}} d\lambda = + \infty
\\
&\int^\infty \frac{\log W(-\lambda)}{\lambda^{2}} d\lambda = + \infty~
\label{eq:volb}
\end{align} are  sufficient for completeness, provided that $W$ is regular in the following sense:
\begin{equation}\label{eq:reg1}
\log W(\lambda) = \begin{cases}
\sqrt{\lambda} \epsilon_1(1/\lambda)~, &\lambda \geq 0 \\
|\lambda|\epsilon_2(1/|\lambda|)~, & \lambda< 0
\end{cases}~, \quad
\lim_{t \to +0} \epsilon_i(t) = \lim_{t \to +0} \frac{t\epsilon_i'(t)}{\epsilon_i(t)} = \lim_{t \to +0} \frac{t^2\epsilon_i''(t)}{\epsilon_i(t)} = 0~.
\end{equation}
Another proof of this result was given by Borichev \cite{Bor}.\footnote{The results of \cite{Vol, Bor} are stated for $L_2(1/W)$, whereas here we focus on the space $C_0(1/W)$. The difference is not essential for the current discussion, as both the original results and the current Theorem~\ref{thm:vol} are valid in both cases; cf.\ Corollary~\ref{cor:l2}. We also mention the work of Bakan \cite{Bak,Bak1}, who found a general connection between completeness in $C_0(1/W)$ and $L_2(1/W)$.}   The proof in \cite{Vol} is based on a construction of an auxiliary analytic function and delicate estimates of the harmonic measure, whereas that of \cite{Bor} is based on the method of quasianalytic (or almost holomorphic) extension, put forth by Dyn$'$kin  \cite{Dyn}.

The result of \cite{Vol} was significantly generalised by M.~Sodin \cite{Sod}, who relied on a theorem of de Branges \cite{DeB}. It is shown in \cite{Sod} that if 
\begin{align}\tag{\ref{eq:vola}$'$}
&\text{polynomials are normally dense in $C_0(\mathbbm{1}_{[0, \infty)}/W)$} \\
\tag{\ref{eq:volb}}
& \int^\infty \frac{\log W(-\lambda)}{\lambda^{2}} dx = + \infty~
\end{align}
and $W$ satisfies the regularity assumption
\begin{equation}\tag{\ref{eq:reg1}$'$}
\log W(\lambda) = |\lambda| \epsilon_2(1/|\lambda|) \text{ for $\lambda < 0$}, \quad \text{and} \quad \epsilon_2(t) \searrow 0~, \frac{t\epsilon_2'(t)}{\epsilon_2(t)} \searrow 0\quad \text{as $t \to +0$}
\end{equation}
then polynomials are also dense in $C_0(1/W)$. Here (\ref{eq:vola}$'$) means that polynomials are dense for any weight differing from $W/\mathbbm{1}_{[0, \infty)}$ at a finite number of points; this condition is necessary, and thus can not be further relaxed. The regularity condition (\ref{eq:reg1}$'$) is somewhat restrictive, however, it is shown in \cite{Sod} that some regularity has to be imposed: there exists a weight $W$ satisfying (\ref{eq:vola}) and (\ref{eq:volb}) such that the functions $s \mapsto \log W(\pm e^s)$ are convex and still polynomials are not dense in $C_0(1/W)$. 

\medskip Here we prove
\begin{thm}\label{thm:vol}
Let $W: \mathbb R \to [1, \infty]$ be a (lower semicontinuous) function such that $\frac{1}{W(\lambda)}= O(\lambda^{-\infty})$, and (\ref{eq:vola}) and (\ref{eq:volb}) hold. If the functions $s \mapsto \log W(e^s)$ and $s \mapsto \log W(-s^2)$ are convex on $[s_0, \infty)$, then polynomials are dense in $C_0(1/W)$.
\end{thm}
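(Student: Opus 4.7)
The plan is to apply Vul's Theorem~\ref{thm:vul} to the measure whose existence is guaranteed, via Hahn--Banach duality, if polynomials fail to be dense in $C_0(1/W)$.

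By Hahn--Banach, it suffices to show that every finite signed Borel measure $\sigma$ on $\mathbb{R}$ with $\int W\,|d\sigma| < \infty$ and vanishing moments $\int \lambda^k\,d\sigma = 0$ for all integers $k \geq 0$ is identically zero. Set $p(s) = \log W(-s^2)$ for $s \geq s_0$, extended to a non-decreasing convex function on $[0, \infty)$; this is legitimate by the convexity hypothesis on $s \mapsto \log W(-s^2)$. The change of variables $\mu = s^2$ identifies condition (\ref{eq:volb}) with Vul's divergence hypothesis (\ref{eq:vulcond}),
\[
\int^\infty \frac{\log W(-\mu)}{\mu^2}\,d\mu = 2\int^\infty \frac{p(s)}{s^3}\,ds = \infty,
\]
while the definition of the Legendre conjugate gives $e^{xs} \leq e^{p^*(x)} W(-s^2)$; setting $s = \sqrt{|\lambda|}$ and integrating over $(-\infty, 0)$,
\[
M_\sigma(x) = \int_{-\infty}^0 e^{x\sqrt{|\lambda|}}\,|d\sigma|(\lambda) \leq e^{p^*(x)} \int W\,|d\sigma|,
\]
verifying Vul's majorant condition (\ref{eq:maj}).

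The central step is to show $\mathfrak{C}\sigma \equiv 0$. Expanding $\cos(x\sqrt{\lambda})$ as an entire function of $\lambda$ (with the interpretation $\cosh(x\sqrt{|\lambda|})$ for $\lambda < 0$) gives the Taylor series $\cos(x\sqrt{\lambda}) = \sum_{k \geq 0}(-1)^k x^{2k}\lambda^k/(2k)!$, so formally
\[
(\mathfrak{C}\sigma)(x) = \sum_{k = 0}^\infty \frac{(-1)^k x^{2k}}{(2k)!}\int \lambda^k\,d\sigma = 0
\]
by the moment hypothesis. The interchange of sum and integral is justified once $\int \cosh(|x|\sqrt{|\lambda|})\,|d\sigma| < \infty$: the contribution from the negative axis is precisely $M_{|\sigma|}(|x|)$, finite by the previous paragraph. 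The contribution from the positive axis is handled using the hypotheses (\ref{eq:vola}) and convexity of $s \mapsto \log W(e^s)$: after the substitution $\lambda = t^2$, these are exactly the classical assumptions (convexity of $\log W(t^2)$ in $\log t$, and Hall's divergence $\int \log W(t^2)/t^2\,dt = \infty$) under which Akhiezer's theorem gives density of polynomials in $C_0(1/W)$ on the half-line $[0, \infty)$. Approximating $\cos(x\sqrt{\lambda})$ on $[0, \infty)$ by polynomials $q_n$ in the $C_0(1/W)$-norm, and combining the moment identity $\int_0^\infty q_n\,d\sigma = -\int_{-\infty}^0 q_n\,d\sigma$ with the Vul majorant on the negative axis, one obtains $(\mathfrak{C}\sigma)(x) = 0$ for every $x \in \mathbb{R}$.

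Having verified all hypotheses, Theorem~\ref{thm:vul} yields $\sigma \equiv \mathrm{const}$; the $k = 0$ moment then forces this constant to be zero, so $\sigma \equiv 0$, as required. The main obstacle is the identity $\mathfrak{C}\sigma \equiv 0$: the polynomials $q_n$ produced by Akhiezer's half-line theorem are only controlled on $[0, \infty)$ in the $C_0(1/W)$-norm, whereas the integrand $\cosh(x\sqrt{|\lambda|})$ grows exponentially on $(-\infty, 0)$. Bridging the gap will require a careful selection of the approximants---exploiting that $\cos(x\sqrt{\lambda})$ is an entire function of order $1/2$, so that Phragm\'en--Lindel\"of-type estimates constrain the polynomial approximants on $(-\infty, 0)$---together with the Vul majorant to justify passage to the limit.
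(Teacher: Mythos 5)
Your reduction to Vul's theorem is the right strategy, and the verification of its hypotheses on the negative half-axis (the construction of $p$ from $\log W(-s^2)$, the change of variables identifying (\ref{eq:volb}) with (\ref{eq:vulcond}), and the majorant $M_\sigma(x)\le Ce^{p^*(x)}$) is correct and matches the paper. But the central step --- proving $\mathfrak C\sigma\equiv 0$ --- is where your argument has a genuine gap, which you yourself flag at the end. Your first route, termwise integration of the power series of $\cos(x\sqrt\lambda)$, fails outright on the positive axis: absolute convergence would require $\int_0^\infty e^{x\sqrt\lambda}\,|d\sigma|<\infty$, but (\ref{eq:vola}) permits $\log W(\lambda)=\sqrt\lambda/\log\lambda$, say, for which $e^{x\sqrt\lambda}/W(\lambda)\to\infty$ and no such bound holds. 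Your second route, approximating $\cos(x\sqrt\cdot)$ on $[0,\infty)$ by polynomials $q_n$ in the $C_0(1/W)$-norm and transferring to $(-\infty,0)$ via the moment identities, founders on exactly the point you name: nothing controls $q_n$ on the negative axis, and the proposed Phragm\'en--Lindel\"of fix is not an argument --- high-degree polynomials well adapted to a weight on $[0,\infty)$ can be arbitrarily large on $(-\infty,0)$ without further quantitative input.

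The paper's device, which you are missing, sidesteps this entirely: it never integrates term by term and never approximates the kernel. Instead it observes that $\phi_f=\mathfrak C\sigma$ is a $C^\infty$ function whose derivatives satisfy $\sup_{|y|\le x}|\phi_f^{(k)}(y)|\le C M_k(x)$ with $M_k(x)=\max_{\lambda\ge0}\lambda^{k/2}\{W(-\lambda)^{-1}e^{x\sqrt\lambda}+W(\lambda)^{-1}\}$ --- here only powers $\lambda^{k/2}$ appear against $W(\lambda)^{-1}$ on the positive axis, since $\cos(x\sqrt\lambda)$ and its $x$-derivatives are bounded there, so the condition $1/W=O(\lambda^{-\infty})$ suffices. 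One then checks $\sum_k M_k(x)^{-1/k}=\infty$: the positive-axis contribution $B_k=\exp(q^*(k/2))$, $q(s)=\log W(e^s)$, satisfies $\sum B_k^{-1/k}=\infty$ by Ostrowski's equivalence with (\ref{eq:vola}) (using the convexity of $q$), while (\ref{eq:volb}) bounds the negative-axis contribution by $C_x^{k+1}k!$. The Denjoy--Carleman theorem then gives $\phi_f\equiv0$ on every $[-x,x]$, since all derivatives vanish at the origin by the moment hypothesis. This is the missing idea; without it (or a genuine substitute) your proof is incomplete, because everything after ``Having verified all hypotheses'' depends on $\mathfrak C\sigma\equiv0$.
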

\noindent Equivalently, the class $\mathfrak Q_W$ is quasianalytic; {\em a fortiori}, a non-zero $f \in \mathfrak Q_W$ can not vanish on a set of positive measure. 

\medskip\noindent
Our condition on $W|_{\mathbb R_+}$ is much more stringent than the  optimal condition (\ref{eq:vola}$'$) of \cite{Sod} (although less stringent than (\ref{eq:reg1})). On the other hand, the regularity assumptions on $W|_{\mathbb R_-}$ are weaker than (\ref{eq:reg1}$'$).

More importantly, the proof of Theorem~\ref{thm:vol} is relatively elementary. It is based on the following well-known construction, similar to the one used to relate the Stieltjes moment problem to the Hamburger one. Let 
\[ f(x) = \int_{-\infty}^\infty e^{i x \lambda} d\sigma(\lambda)~,\]
where $\sigma: \mathbb R \to \mathbb R$ is a function of bounded variation such that
\[ \forall k \geq 0 \quad \int_0^\infty  |\lambda|^k  |d\sigma(\lambda)| < \infty~, \quad \forall x \geq 0 \quad  \int_{-\infty}^0 e^{x \sqrt{|\lambda|}} d\sigma(\lambda) < \infty~.\]
Denote 
\begin{equation}\label{eq:phi}\phi_f(x) =  (\mathfrak C \sigma)(x) = \int_{-\infty}^\infty \cos(x \sqrt \lambda) d\sigma(\lambda)~.\end{equation}
Observing that 
\[ f^{(k)}(0) = i^k \int \lambda^k d\sigma(\lambda)~, \quad 
\phi_f^{(2k)}(0) = (-1)^k  \int \lambda^k d\sigma(\lambda)~, \quad 
\phi_f^{(2k+1)}(0) = 0~,\]
we see that if $f$ has a zero of infinite order at $x = 0$, then so does $\phi_f$.  

In the recent  note \cite{me2}, we presented an application of the map $f \mapsto \phi_f$ to a problem of analytic quasianalyticity, which corresponds to the case when $\sigma$ is supported on $\mathbb R_+$. Here we use this map  and the Denjoy--Carleman theorem to reduce Theorem~\ref{thm:vol} to Theorem~\ref{thm:vul}. We note that both the Denjoy--Carleman theorem and Theorem~\ref{thm:vul} can be proved (see \cite[\S 14.3]{Levin} and Section~\ref{s:vul} below, respectively) using but the Carleman theorem from complex analysis, and the latter is a direct consequence of the formula for the harmonic measure in the half-plane.

\paragraph{Several corollaries} The following corollaries are derived from Theorem~\ref{thm:vol} by relatively standard methods. The proofs are sketched in Section~\ref{s:cor}.

\begin{cor}\label{cor:l2}  Let  $W$ be as in Theorem~\ref{thm:vol}. For any  measure $\mu \geq 0$ with  $\int W(\lambda) d\mu(\lambda) < \infty$, polynomials are dense in $L_2(\mu)$.
\end{cor}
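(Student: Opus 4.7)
The plan is via duality. By the Riesz representation theorem, density of polynomials in $L_2(\mu)$ is equivalent to showing that any $f \in L_2(\mu)$ orthogonal to every polynomial vanishes $\mu$-a.e. Given such an $f$, I form the finite signed measure $d\nu = f\, d\mu$ (finite because $\mu$ is, using $W \geq 1$ and $\int W d\mu < \infty$, together with $\|f\|_{L_1(\mu)} \leq \mu(\mathbb R)^{1/2}\|f\|_{L_2(\mu)}$). The condition $1/W(\lambda) = O(\lambda^{-\infty})$ combined with $\int W\, d\mu < \infty$ makes every polynomial moment of $\mu$ finite, so polynomials lie in $L_2(\mu)$ and the orthogonality translates into $\int \lambda^k\, d\nu = 0$ for every $k \geq 0$. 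The Fourier transform $g(x) = \int e^{ix\lambda}\, d\nu(\lambda)$ is therefore smooth with a zero of infinite order at $x = 0$, and the goal becomes to conclude $g \equiv 0$ by invoking Theorem~\ref{thm:vol} in its equivalent form: $\mathfrak Q_W$ is quasianalytic.

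The main obstacle is that $\int W\, d\mu < \infty$ does not immediately yield $\int W\, |d\nu| < \infty$, which is what would be required to place $g$ in $\mathfrak Q_W$. The remedy is to work instead with the weight $\widetilde W := \sqrt{W}$: by the Cauchy--Schwarz inequality,
\[
\int \sqrt{W(\lambda)}\, |d\nu(\lambda)| \;=\; \int \sqrt{W}\, |f|\, d\mu \;\leq\; \Bigl(\int W\, d\mu\Bigr)^{1/2} \|f\|_{L_2(\mu)} \;<\; \infty~,
\]
so $g \in \mathfrak Q_{\widetilde W}$. I then verify that $\widetilde W$ itself satisfies the hypotheses of Theorem~\ref{thm:vol}: it is lower semicontinuous with values in $[1,\infty]$, one has $1/\widetilde W = O(\lambda^{-\infty})$, and since $\log \widetilde W = \tfrac12 \log W$, both the convexity of $s \mapsto \log \widetilde W(e^s)$ and $s \mapsto \log \widetilde W(-s^2)$ on $[s_0,\infty)$ and the divergence of the integrals (\ref{eq:vola}) and (\ref{eq:volb}) for $\widetilde W$ are inherited directly from $W$ (each is halved). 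Applying Theorem~\ref{thm:vol} to $\widetilde W$ forces $g \equiv 0$, hence $\nu = 0$, and finally $f = 0$ $\mu$-a.e., as desired.

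The only non-routine observation is the stability of Theorem~\ref{thm:vol} under the replacement $W \mapsto W^\alpha$ for any $\alpha \in (0,1]$; this robustness is exactly what lets the Cauchy--Schwarz trick convert the $L^1(W\, d\mu)$ hypothesis on $\mu$ into an $L^1(\widetilde W\, |d\nu|)$ bound on $\nu$, bridging the gap between the assumption of the corollary and the input required by Theorem~\ref{thm:vol}.
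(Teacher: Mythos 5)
Your proof is correct and rests on the same key observation as the paper's: replace $W$ by $\sqrt{W}$ (which still satisfies the hypotheses of Theorem~\ref{thm:vol}) and use Cauchy--Schwarz to bridge the $L_2(\mu)$ condition and the $L_1(W\,d\mu)$ condition. The only difference is packaging --- the paper argues directly, embedding $C_0(1/\sqrt{W})$ continuously into $L_2(\mu)$ and chaining approximations, whereas you run the dual (quasianalyticity) formulation via an annihilating measure $f\,d\mu$; the two are equivalent by the Hahn--Banach argument the paper already invokes.
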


\begin{cor}\label{cor:moment}  Let  $W$ be as in Theorem~\ref{thm:vol}. Any measure  $\mu \geq 0$ with $\int W(\lambda) d\mu(\lambda) < \infty$  is Hamburger determinate, i.e.\ it shares its moments with no other non-negative measure on $\mathbb  R$.
\end{cor}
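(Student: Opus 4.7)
The plan is to reduce Corollary~\ref{cor:moment} to Corollary~\ref{cor:l2} via the classical criterion of M.\ Riesz (see, e.g., \cite{Akh}): a finite positive Borel measure $\mu$ on $\mathbb R$ with all moments finite is Hamburger-determinate if and only if the polynomials are dense in $L_2((1+\lambda^2)\,d\mu)$. Granting this criterion, the task reduces to verifying the density of polynomials in $L_2(\mu')$, where $\mu' := (1+\lambda^2)\,d\mu$ is still a finite positive measure with all moments finite. To this end, I would apply Corollary~\ref{cor:l2} to $\mu'$ with a suitably adjusted weight $W_0$; the adjustment is needed because we do not know \emph{a priori} that $\int (1+\lambda^2)\,W\, d\mu < \infty$.

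My choice is $W_0(\lambda) := \sqrt{W(\lambda)}$. The hypotheses of Theorem~\ref{thm:vol} transfer easily: since $\log W_0 = \tfrac12 \log W$, both divergent integrals (\ref{eq:vola}), (\ref{eq:volb}) remain divergent (merely halved), while convexity of $s \mapsto \log W(e^s)$ and $s \mapsto \log W(-s^2)$ on $[s_0,\infty)$ is preserved under scaling by a positive constant; the lower bound $W_0 \geq 1$ and the rapid decay $1/W_0 = O(\lambda^{-\infty})$ are inherited from $W$. For the integrability, I use that $1/W = O(\lambda^{-\infty})$ forces $W(\lambda) \geq (1+\lambda^2)^2$ once $|\lambda|$ is large enough; hence $(1+\lambda^2)\sqrt{W(\lambda)} \leq W(\lambda)$ outside a compact set, and $\int W_0\, d\mu' \leq \int W\, d\mu + C < \infty$.

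With these two verifications in hand, Corollary~\ref{cor:l2} applied to $(\mu', W_0)$ yields density of polynomials in $L_2((1+\lambda^2)\,d\mu)$, and the M.\ Riesz criterion then delivers the determinacy of $\mu$. The main conceptual ingredient beyond Corollary~\ref{cor:l2} is the Riesz criterion; note that applying Corollary~\ref{cor:l2} with $\mu$ in place of $\mu'$ would \emph{not} suffice, because density of polynomials in $L_2(\mu)$ signifies only that $\mu$ is $N$-extremal and does not in general preclude the existence of other non-negative measures sharing the moments of $\mu$.
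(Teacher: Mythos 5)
Your proof is correct, but it takes a genuinely different route from the paper's. The paper also reduces to Corollary~\ref{cor:l2}, but differently: it first enlarges $\mu$ to a non-discrete measure $\mu_1\ge\mu$ still satisfying $\int W\,d\mu_1<\infty$, applies Corollary~\ref{cor:l2} to get density of polynomials in $L_2(\mu_1)$, invokes M.~Riesz's theorem that this density forces $\mu_1$ to be N-extremal, and then uses the dichotomy that an N-extremal measure is either determinate or discrete; since $\mu_1$ is not discrete it is determinate, and determinacy passes down to $\mu\le\mu_1$. You instead invoke the other classical form of Riesz's criterion --- determinacy of $\mu$ is equivalent to polynomial density in $L_2((1+\lambda^2)\,d\mu)$ --- and apply Corollary~\ref{cor:l2} directly to the pair $\bigl((1+\lambda^2)\,d\mu,\sqrt{W}\bigr)$. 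Your verifications are sound: taking the square root of $W$ preserves all hypotheses of Theorem~\ref{thm:vol} (the integrals (\ref{eq:vola})--(\ref{eq:volb}) are merely halved, convexity and lower semicontinuity survive), and $1/W=O(\lambda^{-\infty})$ gives $(1+\lambda^2)\sqrt{W}\le W$ off a compact set, while on the compact set one bounds $(1+\lambda^2)\sqrt W\le C\,W$ using $W\ge 1$, so $\int\sqrt{W}\,(1+\lambda^2)\,d\mu<\infty$. What each approach buys: yours avoids both the slightly artificial insertion of a continuous component and the (implicit in the paper) lemma that determinacy is inherited by dominated measures, at the cost of re-checking the hypotheses for the modified weight; the paper's version keeps the weight and measure untouched but needs the N-extremal/discrete dichotomy. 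Your closing remark --- that density in $L_2(\mu)$ alone yields only N-extremality and hence does not suffice --- is exactly the point the paper's non-discreteness trick is designed to address. One trivial fix: the criterion you use is in Akhiezer's book \cite{AkhBook} (near Theorem 2.3.3), not the survey \cite{Akh}.
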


\smallskip
Finally, we deduce a variant of a result of Volberg from \cite{Vol2}.
\begin{cor}\label{cor:fr} Let $W$ be as in Theorem~\ref{thm:vol}, and let $\nu \geq 0$ be a measure on $\mathbb R$ such that 
\[ \int W(1/\lambda) d\nu(\lambda) < \infty~.\]
Let $(z_j = a_j + i b_j)_{j = 1}^\infty$ be a sequence of points in the upper half-plane such that $z_j \to 0$ and
\begin{equation}\label{eq:nontang}
\lim_{j \to \infty} \frac{\log W(1/a_j)}{\log (1/b_j)} = \infty~.
\end{equation}
Then the linear span of $\left\{ \frac{1}{t - z_j} \right\}_{j \geq 1}$ is dense in $L_2(\nu)$.
 \end{cor}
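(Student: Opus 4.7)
The plan is to reduce the corollary, via the substitution $t \mapsto 1/t$, to the density of polynomials in $L_2(\tilde\nu)$ furnished by Corollary~\ref{cor:l2}. By Hahn--Banach duality it suffices to show that any $G \in L_2(\nu)$ orthogonal to every $1/(t-z_j)$ vanishes $\nu$-a.e. Let $\tilde\nu$ denote the push-forward of $\nu$ under $t \mapsto 1/t$, set $\tilde G(\lambda) = G(1/\lambda)$, and note that $f \mapsto f(1/\cdot)$ is an isometric isomorphism $L_2(\nu) \to L_2(\tilde\nu)$. The hypothesis translates to $\int W(\lambda)\, d\tilde\nu(\lambda) < \infty$, so Corollary~\ref{cor:l2} gives density of polynomials in $L_2(\tilde\nu)$. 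With $w_j = 1/z_j$ and $R_j(\lambda) = \lambda/(w_j - \lambda)$, the function $1/(t - z_j) \in L_2(\nu)$ corresponds to $w_j R_j \in L_2(\tilde\nu)$, and the orthogonality becomes $\tilde G \perp R_j$ for every $j$.

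The main claim is that $\lambda^k \in \overline{\mathrm{span}\{R_j\}}$ in $L_2(\tilde\nu)$ for each $k \geq 1$; granting this, $\tilde G \perp R_j$ forces $\lambda\tilde G$ to be orthogonal to every polynomial, which by Corollary~\ref{cor:l2} gives $\lambda \tilde G = 0$ $\tilde\nu$-a.e., and since $\tilde\nu(\{0\}) = \nu(\{\infty\}) = 0$ also $\tilde G \equiv 0$. To establish the claim I argue inductively, introducing the approximants $S_j^{(k)} := \lambda^{k+1}\,w_j/(w_j - \lambda) \in L_2(\tilde\nu)$. A direct computation gives $S_j^{(0)} = w_j R_j$ and the recursion $S_j^{(k)} = w_j \bigl(S_j^{(k-1)} - \lambda^k\bigr)$, so once $\lambda,\dots,\lambda^k$ are in $\overline{\mathrm{span}\{R_j\}}$ the element $S_j^{(k)}$ is as well. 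Since $S_j^{(k)} - \lambda^{k+1} = \lambda^{k+2}/(w_j - \lambda)$, translating back to $\nu$ yields
\[ \|S_j^{(k)} - \lambda^{k+1}\|_{L_2(\tilde\nu)}^2 = |z_j|^2 \int \frac{d\nu(t)}{t^{2k+2}\,|t - z_j|^2}, \]
and it suffices to prove this tends to $0$ as $j \to \infty$.

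Assume, after passing to a subsequence, that $a_j$ has constant sign, say $a_j > 0$. The integrand is estimated region by region. On $\{t < 0\}$ a half-plane geometry gives the improved bound $|t - z_j| \geq |z_j|$, so the contribution is controlled using the elementary estimate $\nu(\{|t| < \delta\}) \leq C/\min\bigl(W(1/\delta), W(-1/\delta)\bigr)$ (which follows from $\int W(1/t)\,d\nu < \infty$) together with the super-polynomial growth of $W$ at $-\infty$. On $\{0 < t < a_j/2\} \cup \{t > 3a_j/2\}$ one has $|t - z_j| \geq a_j/2$ or $|t - z_j| \geq |t|/2$, and a standard tail estimate using finiteness of the moments of $\nu$ suffices. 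The delicate contribution comes from the interval around $a_j$: there $|t - z_j| \geq b_j$ and $\nu\bigl(\{a_j/2 \leq t \leq 3a_j/2\}\bigr) \leq C/W(1/a_j)$ (using the monotonicity of $W$ near $+\infty$ granted by the convexity assumption of Theorem~\ref{thm:vol}), yielding a contribution of order $|z_j|^2/\bigl(b_j^2\, a_j^{2k+2}\, W(1/a_j)\bigr)$. This is precisely what the non-tangential hypothesis~\eqref{eq:nontang} forces to $0$, and realising this matching between the decay of $\nu$ near $a_j$ and the denominator loss $1/b_j^2$ is the main technical obstacle.
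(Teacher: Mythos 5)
Your reduction (push forward by $t\mapsto 1/t$, then approximate the monomials $\lambda^k$ in $L_2(\tilde\nu)$ by the transplanted Cauchy kernels $R_j$) is a genuinely different route from the paper, which instead forms the Cauchy transform $g(z)=\int \bar u(t)\,d\nu(t)/(t-z)$ of an annihilating function, shows that $g$ together with all its derivatives is bounded on a region $\Omega$ containing the $z_j$ and abutting the origin, deduces that $g$ vanishes to infinite order at $0$, and then feeds the resulting vanishing of the moments $\int t^{-k}\bar u\,d\nu$ into the determinacy statement of Corollary~\ref{cor:moment} (applied with the modified weight $\max(1,\sqrt{W}/(1+|\lambda|))$). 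Your algebraic skeleton --- the recursion $S_j^{(k)}=w_j(S_j^{(k-1)}-\lambda^k)$ and the identity $\|S_j^{(k)}-\lambda^{k+1}\|_{L_2(\tilde\nu)}^2=|z_j|^2\int |t|^{-2k-2}|t-z_j|^{-2}\,d\nu(t)$ --- is correct, and the outer regions of your decomposition can indeed be handled by the superpolynomial growth of $W$ (though the region $t<0$ needs the further split $|t|\gtrless 2|z_j|$, since $|t-z_j|\ge|z_j|$ alone only yields a bounded, not vanishing, contribution). A small secondary point: concluding $\lambda\tilde G=0$ ``by Corollary~\ref{cor:l2}'' presumes $\lambda\tilde G\in L_2(\tilde\nu)$, which is not given; one should instead pass, as the paper does, to Corollary~\ref{cor:moment} for the complex measure $\lambda\tilde G\,d\tilde\nu$, whose $W_1$-weighted total variation is finite by Cauchy--Schwarz.

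The decisive gap is in the middle region. You claim $\nu\bigl(\{a_j/2\le t\le 3a_j/2\}\bigr)\le C/W(1/a_j)$ ``using the monotonicity of $W$ near $+\infty$''. Monotonicity gives $W(1/t)\ge W(1/a_j)$ only for $t\le a_j$; on the right half $(a_j,3a_j/2]$ one has $1/t<1/a_j$ and only the bound $W(1/t)\ge W\bigl(2/(3a_j)\bigr)$, and under the hypotheses of Theorem~\ref{thm:vol} the ratio $\log W(1/a_j)/\log W(2/(3a_j))$ may tend to infinity (e.g.\ $\log W(\lambda)=e^{e^\lambda}$ is admissible: $s\mapsto e^{e^{e^s}}$ is convex). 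Since the hypothesis~(\ref{eq:nontang}) ties $b_j$ to $W(1/a_j)$ \emph{only}, and $b_j$ may be far smaller than any power of $a_j$, the substitute bound $C/W(2/(3a_j))$ does not make $|z_j|^2/\bigl(b_j^2 a_j^{2k+2} W(2/(3a_j))\bigr)$ tend to zero, nor does the superpolynomial floor $W(\lambda)\ge c_N\lambda^N$ rescue it. What is really needed is a lower bound on $W(1/t)\,|t-z_j|^{2k+2}$ for $t$ slightly to the \emph{right} of $a_j$, where $W(1/t)$ may already be a vanishingly small power of $W(1/a_j)$ while $|t-a_j|$ is still sub-polynomially small in $a_j$; this is exactly the content of the paper's uniform estimate~(\ref{eq:unif}) on the region $\Omega$, and it is the crux of the corollary. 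Your write-up names this matching as ``the main technical obstacle'' but then disposes of it with a measure estimate that does not hold, so the proof as it stands is incomplete at its central point.
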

 Here the regularity assumptions on $W(\lambda)$ are somewhat weaker than in \cite{Vol2} (where a condition of the form (\ref{eq:reg1}$'$) and its counterpart at $+\infty$ are imposed), and the assumption of non-tangential convergence $|b_j| \geq \epsilon |a_j|$ is relaxed to (\ref{eq:nontang}).

\section{Proof of Theorem~\ref{thm:vol}}
Assume that $f \in \mathfrak Q_W$ has a zero of infinite order, say, at $x = 0$: $f^{(k)}(0) = 0$ for all $k \geq 0$. Then $\phi_f$ of (\ref{eq:phi}) also satisfies $\phi_f^{(k)}(0) = 0$ for all $k \geq 0$, and
\begin{equation}\label{eq:estderphi} \sup_{|y| \leq x} |\phi_f^{(k)}(y)| \leq C M_k(x) \overset{\text{def}}{=} C \max_{\lambda \geq 0} \lambda^{k/2} \left\{
W(-\lambda)^{-1} e^{x \sqrt{\lambda}} + W(\lambda)^{-1} \right\}~. \end{equation}
Let us show that 
\begin{equation}\label{eq:quas} \forall x \geq 0  \quad \sum_{k \geq 1} M_k(x)^{-1/k} = \infty~. \end{equation}
To this end, set 
\[ B_k  = \max_{\lambda \geq 0} \lambda^{k/2} W(\lambda)^{-1} = \exp(\max_{s} [\frac{k}{2} s - \log W(e^s)]) = 
\exp(q^*(k/2))~,  \]
where $q(s) = \log W(e^s)$. From the convexity of $q$, we deduce (following Ostrowski \cite{Ostr}) that the condition (\ref{eq:vola}) implies (and is equivalent to)
\begin{equation}\label{eq:quas_bk} \sum_{k=1}^\infty B_k^{-1/k} = \infty~. \end{equation}
Now, from (\ref{eq:volb}) we have that $\log W(-\lambda) \geq (x + 1) \sqrt{\lambda}$ for sufficiently large $\lambda \geq \lambda_0(x)$, therefore 
\[ \max_{\lambda \geq 0} \lambda^{k/2} W(-\lambda)^{-1} e^{x \sqrt\lambda} 
\leq \lambda_0^k e^{x \sqrt{\lambda_0}} 
   + \max_{\lambda \geq 0} \lambda^{k/2} e^{- \sqrt\lambda}  
 \leq C_x^{k+1} k!\]
Thus
\[ \sum_{k=1}^\infty M_k(x)^{-1/k} \geq c_x \sum_{k=1}^\infty \min(B_k^{-1/k}, 1/ k) = \infty~,\]
where on the last step we used (\ref{eq:quas_bk}) and the Cauchy condensation test. 
According to the Denjoy--Carleman theorem \cite{Carl}, \cite[\S 14.3]{Levin}, the class of functions admitting an estimate (\ref{eq:estderphi}) with $M_k$ satisfying (\ref{eq:quas}) is quasianalytic in the sense of Hadamard, whence $\phi_f \equiv 0$.

Now we appeal to Theorem~\ref{thm:vul}. Let $p(s)$ be the largest convex minorant of $p_0(s) = \log W(- s^2)$, $s \geq 0$. The functions $p$ and $p_0$ coincide for large $s$, therefore
\[ \int^\infty \frac{p(s)}{s^3} ds = \infty~.\]
We have:
\[ \int_{-\infty}^0 \exp(x \sqrt{|\lambda|}) |d\sigma(\lambda)| 
\leq  \sup_{\lambda < 0} W(\lambda)^{-1}  e^{x \sqrt{|\lambda|}} \int_{-\infty}^0 W(\lambda) |d\sigma(\lambda)|
\leq C   \exp(p^*(x))~.\]
Adjusting the constants, we can assume that $p$ is non-decreasing. Therefore  Theorem~\ref{thm:vul} applies and we obtain $\sigma \equiv \operatorname{const}$ and $f \equiv 0$. \qed

\section{Proof of Corollaries}\label{s:cor}

\begin{proof}[Proof of Corollary~\ref{cor:l2}]
Observe that $W_1(\lambda) = \sqrt{W(\lambda)}$ also satisfies the conditions of Theorem~\ref{thm:vol}, and that for any $u \in C_0(1/W_1)$
\[ \|u\|_{L_2(\mu)}^2 = \int |u(\lambda)|^2 d\mu(\lambda)
 \leq \left[ \sup |u(\lambda)| / W_1(\lambda) \right]^2 \, \int W(\lambda) d\mu(\lambda) = C  \| u \|_{C_0(1/W_1)}^2~.\]
Any function $L_2(\mu)$ can be approximated by functions in $C_0(1/W)$, and, by Theorem~\ref{thm:vol}, these in turn can be approximated by polynomials.
\end{proof}
We mention again that a general reduction of the problem of completeness in $L_2(1/W)$ to that in $C_0(1/W)$ was found by Bakan in \cite{Bak,Bak1}.

\begin{proof}[Proof of Corollary~\ref{cor:moment}]
Pick a measure $\mu_1 \geq \mu$ be a measure which is not discrete and still satisfies $\int W(\lambda) d\mu_1(\lambda) < \infty$ (for example, one can add to $\mu$ a small continuous component near the origin). By corollary~\ref{cor:l2}, polynomials are dense in $L_2(\mu_1)$. By a theorem of M.\ Riesz \cite[Theorem 2.3.3]{AkhBook}, $\mu_1$ is $N$-extreme. Further \cite[\S 3.4.1]{AkhBook}, an $N$-extreme measure is either moment-determinate or discrete (or both). Our $\mu_1$ is not discrete, hence it is moment-determinate, and hence so is $\mu$.
\end{proof}

\begin{proof}[Proof of Corollary~\ref{cor:fr}]
The condition (\ref{eq:nontang}) implies that
\begin{equation}\label{eq:nontang-cor} \forall k \geq 1 \quad c_k = \inf_j  \left[  W(1/a_j) b_j^k \right] > 0~. \end{equation}
Consider the domain 
\begin{equation} \Omega = \left\{  z = a + i b \, : \,  -1 \leq  a \leq 1~, \,\, b > 0 ~,\,\, \forall k \geq 1 \quad W(1/a) b^k \geq c_k\right\}~.\end{equation}
Then 
\begin{equation}\label{eq:unif} \forall k \geq 1 \quad \inf_{z \in \Omega} 
\inf_{t \in \mathbb{R}} \, [W(1/t)|t - z|^k] \geq 
\inf_{z = a+ib \in \Omega} \, [W(1/a) b^k] = c_k~. \end{equation}
 If $E = \operatorname{span} \left\{ \frac{1}{t - z_j} \right\}_{j \geq 1}$ is not dense in $L_2(\nu)$, let $u(t) \in E^\perp \setminus \{0\}$, and set 
\[ g(z) = \int \frac{\bar u(t) d\nu(t)}{t - z}~. \]
Then $g(z_j) = 0$ for $j=1,2,\cdots$. Further, (\ref{eq:unif}) implies that $g$ and its derivatives are uniformly bounded in $\Omega$:
\[\begin{split} |g^{(k)}(z)| 
&\leq k!  \int \frac{|u(t)| d\nu(t)}{|t-z|^{k+1}} 
\leq k! \, \|u \|_{L_2(\nu)} \, \sqrt{ \int \frac{d\nu(t)}{|t-z|^{2k+2}}} \\
&\leq k! \, \|u \|_{L_2(\nu)} \, \sqrt{ \int  W(1/t) d\nu(t) \times \sup_{t} \frac{1}{W(1/t) |t-z|^{2k+2}}}\\
&\leq \frac{k!}{\sqrt{c_{2k+2}}} \, \|u \|_{L_2(\nu)} \, \sqrt{ \int  W(1/t) d\nu(t)} ~, \end{split}\]
hence $g$ has a zero of infinite order at $z = 0$, i.e.\ 
\[ \forall k \geq 1 \, \quad \, \int \lambda^{k}  \bar u(1/\lambda) d\nu(1/\lambda) = \int \frac{\bar u(t) d\nu(t)}{t^k} = 0~. \]
Let $W_1(\lambda) = \max(1, \sqrt{W(\lambda)} / (1 + |\lambda|))$. Then $W_1$ satisfies the assumptions of Theorem~\ref{thm:vol}, and the complex measure $d\sigma(\lambda) = \lambda \bar u(1/\lambda) d\nu(1/\lambda)$
satisfies
\[ \int W_1(\lambda) |d\sigma(\lambda)|
= \int W_1(\lambda) |\lambda \bar u(1/\lambda)| d\nu(1/\lambda)
\leq \|u \|_{L_2(\nu)} \sqrt{\int W_1(\lambda)^2 \lambda^2 d\nu(1/\lambda)} 
< \infty~. \]
Therefore the measures $(\Re \sigma)_\pm$ in the Jordan decomposition of $\Re \sigma$ satisfy the assumptions of Corollary~\ref{cor:moment} and share the same moments, which implies that $(\Re \sigma)_+ = (\Re \sigma)_-$; similarly,$(\Im \sigma)_+ = (\Im \sigma)_-$, whence $u \equiv 0$ (in $L_2(\nu)$).
\end{proof}

\section{Proof of Theorem~\ref{thm:vul}}\label{s:vul}

We closely follow \cite{Vul}. Assume that (\ref{eq:vulcond}) and (\ref{eq:maj}) hold and that $\mathfrak C \sigma \equiv 0$. Adjusting constants, we may assume that $p(0) = 0$; in this case the $s \mapsto p(s)/ s$ is non-decreasing: 
\[ \frac{d}{ds} \frac{p(s)}{s} = - \frac{p(s) - s p'(s)}{s^2} \geq - \frac{p(0)}{s^2} = 0\]
(at the differentiability points of $p$).

From (\ref{eq:maj}) we have for $\lambda_0 < 0$ and $x \geq 0$:
\begin{equation}\label{eq:dec1}
\int_{-\infty}^{\lambda_0} |d\sigma(\lambda)| \leq \exp(- x \sqrt{|\lambda_0|}) 
\times C \exp(p^*(x))~,
\end{equation}
whence
\begin{equation}\label{eq:dec}
\int_{-\infty}^{\lambda_0} |d\sigma(\lambda)| \leq C \exp(-p(\sqrt{|\lambda_0|}))~.
\end{equation}
Consider the Stieltjes transform 
\[ F(z) = \int \frac{d\sigma(\lambda)}{\lambda - (z +i)}~, \quad \Im z \geq 0~. \]
We have: $|F(z)| \leq C$. If we show that 
\begin{equation}\label{eq:car} \int_{- \infty}^0 \frac{\log |F(x)|}{1 + x^2} dx = - \infty~,\end{equation}
Carleman's theorem \cite[\S 14.2]{Levin} will imply that $F \equiv 0$ and hence $\sigma \equiv \operatorname{const}$. Therefore we turn to the proof of (\ref{eq:car}).

For $x < 0$ let 
\[ F(x) = F_1(x) + F_2(x)~, \quad F_1(x) = \int_{-\infty}^{x/4} \frac{d\sigma(\lambda)}{\lambda - (x +i)}~, \quad  F_2(x) = \int_{x/4}^\infty \frac{d\sigma(\lambda)}{\lambda - (x +i)}~.\]
From (\ref{eq:dec})
\begin{equation}\label{eq:est1} |F_1(x)| \leq \int_{-\infty}^{x/4} |d\sigma(\lambda)| \leq C \exp(-p(\sqrt{|x|}/2))~.\end{equation}
To prove an estimate of the similar form for $F_2(x)$, we start from the identity
\begin{equation}\label{eq:id} \frac{1}{\lambda - (x+i)} = \frac{i}{\sqrt{x+i}} \int_0^\infty \cos(u \sqrt \lambda) \exp(i u \sqrt{x + i}) du~,\end{equation}
valid for $x < 0$, $\Im \sqrt \lambda < \Im \sqrt{x + i}$. From (\ref{eq:id}) we have (for $x < x_0 < 0$):
\[\begin{split} F_2 (x) &= \frac{i}{\sqrt{x +i}} \int_{x/4}^\infty d\sigma(\lambda) \int_0^\infty \cos(u \sqrt \lambda) \exp(i u \sqrt{x + i}) du \\
&= \frac{i}{\sqrt{x+i}} \int_{x/4}^\infty d\sigma(\lambda) \left[ \int_0^{u^*}  + \int_{u^*}^\infty \right]
= I_1(x) + I_2(x)~,\end{split}\]
where we take $u^* = \frac{p(\sqrt{|x|}/2)}{2 \sqrt{|x|}}$. From the assumption $\mathfrak C \sigma \equiv 0$,
\[I_1 (x) = \frac{i}{\sqrt{x+i}} \int_{-\infty}^{x/4} d\sigma(\lambda) \int_0^{u^*} \cos(u \sqrt \lambda) \exp(i u \sqrt{x + i}) du~;\]
estimating 
\[ \left| \frac{i}{\sqrt{x+i}} \int_0^{u^*} \cos(u \sqrt \lambda) \exp(i u \sqrt{x + i}) du \right|
\leq \int_0^{u^*} \cos(u \sqrt{\lambda}) du \leq \exp(u^* \sqrt{|\lambda|})~,\]
we obtain using (\ref{eq:dec}) and (\ref{eq:maj}):
\begin{equation}\label{eq:est2.1-0}
\begin{split}
|I_1(x)| &\leq \int_{-\infty}^{x/4}\exp(u^* \sqrt{|\lambda|}) |d\sigma(\lambda)| \\
&\leq \sqrt{\int_{-\infty}^{x/4} |d\sigma(\lambda)|} \sqrt{\int_{-\infty}^{x/4}\exp(2 u^* \sqrt{|\lambda|}) |d\sigma(\lambda)|} \\
&\leq C \exp(-\frac{1}{2} p(\sqrt{|x|}/2) + \frac{1}{2} p^*(2u^*))~.\end{split}\end{equation}
Let us show that $p^*(2u^*) \leq \frac12 p(\sqrt{|x|}/2)$. Let $s^*$ be such that 
\[ 2u^* s^* = p^*(2u^*) + p(s^*)~, \]
then 
\[ p(s^*) / s^* \leq 2 u^* \leq 4 u^* = p(\sqrt{|x|}/2)/(\sqrt{|x|}/2)~,\]
whence $s^* \leq \sqrt{|x|}/2$ and
\[ p^*(2u^*) \leq 2u^* s^* \leq u^*\sqrt{|x|} = \frac12 p(\sqrt{|x|}/2)~, \]
as claimed. Returning to (\ref{eq:est2.1-0}), we obtain that
\begin{equation}\label{eq:est2.1}
|I_1(x)| \leq C \exp(-\frac14 p(\sqrt{|x|}/2))~. \end{equation}
Now we turn to $I_2(x)$. Using that
\[\left| \frac{i}{\sqrt{x+i}} \int_{u^*}^\infty \cos(u \sqrt \lambda) \exp(i u \sqrt{x + i}) du \right|
\leq \int_{u^*}^\infty  \exp(- u \sqrt{|x|}/2)  du \leq \exp(- u^* \sqrt{|\lambda|}/2)~, \]
we obtain:
\begin{equation}\label{eq:est2.2}
|I_2(x)| \leq C \exp(- u^* \sqrt{|x|}/2 ) =  C \exp(-\frac14 p(\sqrt{|x|}/2))~.
\end{equation}
Combining (\ref{eq:est1}), (\ref{eq:est2.1}), and (\ref{eq:est2.2}) we obtain that
\[ |F(x)| \leq 3 C \exp(-\frac14 p(\sqrt{|x|}/2))~,\]
whence
\[ \int_{-\infty}^{x_0} \frac{\log |F(x)|}{1 + x^2} dx \leq - C_1 - c_1 \int_{|x_0|}^\infty \frac{p(\sqrt{|x|}/2)}{x^2} dx \leq - C_1 - c_2 \int_{\sqrt{|x_0|}}^\infty \frac{p(s)}{s^3} ds = - \infty~,\]
as claimed.

\medskip Vice versa, suppose that (\ref{eq:vulcond}) fails. Then there exists a non-zero entire function $\Phi(z)$ such that 
\[ \forall z = x + iy \in \mathbb C: \quad |\Phi(z)| \leq  \exp(|y| - p(\sqrt{|z|}) - \sqrt{|z|})\]
(see \cite[Lemma 5]{Mand}, where such a function is constructed as a product of dilated cardinal sine functions). Then $\Phi_1(z) =  e^{iz} \Phi(z)$ satisfies 
\[ \forall z \in \mathbb C_+: \quad |\Phi_1(z)| \leq  \exp(- p(\sqrt{|z|}) - \sqrt{|z|})~.\]
Let 
\[ \sigma(\lambda) = \int_{-\infty}^\lambda \Re \Phi_1(\lambda') d\lambda' \quad \text{or} \quad  \sigma(\lambda) = \int_{-\infty}^\lambda \Im \Phi_1(\lambda') d\lambda'~, \]
so that $\sigma \not\equiv \operatorname{const}$. Shifting the integration contour up, we see that $\mathfrak C \sigma \equiv 0$.  We also have:
\[ \int_{-\infty}^0 \exp(x \sqrt{|\lambda|}) |d\sigma(\lambda)| \leq
\int_{0}^\infty \exp(x \sqrt{\lambda} - p(\sqrt{\lambda}) - \sqrt{\lambda}) d\lambda \leq C \exp (p^*(x))~.\qed\]

\paragraph{Acknowledgement.} I am grateful to A.\ Kiro, M.\ Sodin and A.\ Volberg for helpful comments.

\end{document}